\title{A note on the natural density of product sets}
\newtheorem{thm}{Theorem}
\newtheorem*{cor*}{Corollary}
\newtheorem{lem}{Lemma}[section]
\theoremstyle{remark}
\newtheorem*{rmk}{Remark}
\newtheorem{que}{Question}
\newtheorem*{que*}{Question}
\newcommand{\eq}[1]{ \begin{equation}
	\begin{split}
	#1 
	\end{equation} }
\newcommand{\als}[1]{\begin{align*} #1 \end{align*} }
\newcommand{\N}{\mathbb{N}}
\newcommand{\Z}{\mathbb{Z}}
\newcommand{\dd}{\mathbf{d}}
\newcommand{\CA}{\mathcal{A}}
\newcommand{\CB}{\mathcal{B}}
\newcommand{\CC}{\mathcal{C}}
\newcommand{\CN}{\mathcal{N}}
\newcommand{\CP}{\mathcal{P}}
\newcommand{\eps}{\varepsilon}
\renewcommand{\phi}{\varphi}
\newcommand{\bs}\boldsymbol{}
\def\moverlay{\mathpalette\mov@rlay}
\def\mov@rlay#1#2{\leavevmode\vtop{%
		\baselineskip\z@skip \lineskiplimit-\maxdimen
		\ialign{\hfil$\m@th#1##$\hfil\cr#2\crcr}}}
\newcommand{\charfusion}[3][\mathord]{
	#1{\ifx#1\mathop\vphantom{#2}\fi
		\mathpalette\mov@rlay{#2\cr#3}
	}
	\ifx#1\mathop\expandafter\displaylimits\fi}
\definecolor{myblue}{rgb}{0.09,0.32,0.44} 
\author{Sandro Bettin}
\address{Dipartimento di Matematica, Universit\`{a} di Genova, Via Dodecaneso 35, 16146 Genova, Italy}
\email{bettin@dima.unige.it}
\author{Dimitris Koukoulopoulos}
\address{D\'epartement de math\'ematiques et de statistique, Universit\'e de Montr\'eal, CP 6128 succ. Centre-Ville, Montr\'eal, QC H3C 3J7, Canada}
\email{koukoulo@dms.umontreal.ca}
\author{Carlo Sanna}
\address{Dipartimento di Scienze Matematiche, Politecnico di Torino, Corso Duca degli Abruzzi 24, 10129 Torino, Italy}
\email{carlo.sanna.dev@gmail.com}
\begin{document}

\maketitle

\begin{abstract}
Given two sets of natural numbers $\mathcal{A}$ and $\mathcal{B}$ of natural density $1$ we prove that their product set $\mathcal{A}\cdot \mathcal{B}:=\{ab:a\in\CA,\,b\in\CB\}$ also has natural density $1$. On the other hand, for any $\varepsilon>0$, we show there are sets $\CA$ of density $>1-\varepsilon$ for which the product set $\mathcal{A}\cdot\mathcal{A}$ has density $<\eps$. 
This answers two questions of Hegyv\'{a}ri, Hennecart and Pach.
\end{abstract}

\section{Introduction}
Given two sets of natural numbers $\mathcal{A}$ and $\mathcal{B}$, let $\mathcal{A}\cdot\mathcal{B} := \{ab : a \in \mathcal{A},\, b \in \mathcal{B}\}$ be their \emph{product set}. Also, for any positive integer $k$, let $\mathcal{A}^k$ denote the $k$-fold product $\CA\cdots\CA$.

The problem of studying the cardinality of product sets has long been of interest in mathematics. The classic {\it multiplication table problem} due to Erd\H{o}s~\cite{MR73619, MR0126424} asks for bounds on the cardinality $M_n$ of the $n\times n$ multiplication table, i.e., of the set $\{1,\dots,n\}^2$. Erd\H os showed that $M_n=o(n^2)$ and Ford~\cite{MR2434882}, following earlier  of Tenenbaum~\cite{MR928635}, determined the exact order of magnitude of $M_n$. More recently~\cite{MR3187928}, the second author of the present paper provided uniform bounds for $\#(\{1, \dots, n_1\}\cdots\{1, \dots, n_s\})$ holding for a wide range of $n_1, \dots, n_s\in \mathbb N$.

For more general sets $\CA$, the problem of estimating $\#(\mathcal{A} \cap [1,x])^2$ was studied by Cilleruelo, Ramana, and Ramar\'e~\cite{MR3640773}. For example, they studied this problem when $\mathcal{A}$ is the set of shifted primes, the set of sums of two squares, and the set of shifted sums of two squares. Moreover, they computed the (almost sure) asymptotic behavior for $\#\mathcal{A}^2$ when $\mathcal{A}$ is a random subset of $\{1, \dots, n\}$ that contains each element of $\{1, \dots , n\}$ independently with probability $\delta \in (0,1)$. Sanna~\cite{SannaAMH} extended this last result to the product of arbitrarily many sets.

Hegyv\'{a}ri, Hennecart and Pach~\cite{MR3939472}  considered the analogous problem for infinite sets of natural numbers. In this context, the role of the cardinality is played by the \emph{natural density} $\mathbf{d}(\mathcal{A})$ of  a set $\mathcal{A}$, defined as usual by 
\[
\dd(A) = \lim_{x\to\infty} \frac{\#\CA\cap[1,x]}{x} .
\]
They asked the following questions (Questions 3 and 2 of  \cite{MR3939472}, respectively):

\begin{que}\label{q1}
If $\mathcal{A}$ is a set of natural numbers of density $1$, is it true that $\CA^2$ also has density $1$?
\end{que}

\begin{que}\label{q2} 
Is it true that $\inf_{\CA\subset\N:\ \dd(\CA)=\alpha} \dd(\CA^2)=0$ for any $\alpha\in [0,1)$, or at least for $\alpha\in[0,\alpha_0)$ for some $\alpha_0\in(0,1)$?
\end{que}

Clearly, Question \ref{q1} has an affirmative answer if $1\in\mathcal A$, and Hegyv\'{a}ri, Hennecart and Pach showed that it also suffices that $\mathcal A$ contains an infinite subset of mutually coprime integers $a_1 <a_2<\cdots$ such that $\sum_{i=1}^\infty a_i^{-1}=+\infty$. Here, we show that the answer is ``yes'' in full generality.

\begin{thm}\label{thm:main}
Let $\mathcal{A}, \mathcal{B} \subseteq \mathbb{N}$.
If $\mathbf{d}(\mathcal{A}) = \mathbf{d}(\mathcal{B}) = 1$, then $\mathbf{d}(\mathcal{A}\cdot\mathcal{B}) = 1$.
\end{thm}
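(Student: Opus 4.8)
The plan is to prove the equivalent statement that the complement $\CC:=\N\setminus(\CA\cdot\CB)$ has natural density $0$, writing $\CA^c:=\N\setminus\CA$, $\CB^c:=\N\setminus\CB$, so $\dd(\CA^c)=\dd(\CB^c)=0$. A couple of harmless reductions come first: if $1\in\CA$ (resp. $1\in\CB$) then $\CA\cdot\CB\supseteq\CB$ (resp. $\supseteq\CA$) and we are done, so the substantive case is $1\notin\CA\cup\CB$; and since $(\CA\cap\CB)\cdot(\CA\cap\CB)\subseteq\CA\cdot\CB$, it even suffices to treat $\CA=\CB$, though we need not insist on this. The naive first move is a divisor count: if $n\in\CC$ then every factorization $n=ab$ has $a\in\CA^c$ or $b\in\CB^c$, so $\tau(n)\le\#\{d\mid n:\,d\in\CA^c\}+\#\{d\mid n:\,n/d\in\CB^c\}$; summing over $n\le x$ and inverting the order of summation yields $\sum_{n\le x,\,n\in\CC}\tau(n)\le\sum_{a\in\CA^c,\,a\le x}\fl{x/a}+\sum_{b\in\CB^c,\,b\le x}\fl{x/b}$. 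Since $\dd(\CA^c)=0$, a partial summation gives $\sum_{a\in\CA^c,\,a\le x}1/a=o(\log x)$, and likewise for $\CB^c$, so the right‑hand side is $o(x\log x)$. This is clean but \emph{not} enough: $\tau(n)$ is typically only $(\log n)^{\log 2+o(1)}$, so $\sum_{n\le x,\,n\in\CC}\tau(n)=o(x\log x)$ is consistent with $\CC\cap[1,x]$ having as many as $(1-o(1))x$ elements. In fact, taking $\CA=\CB$ with $\CA^c$ equal to the integers whose number of prime factors is abnormally close to $\tfrac12\log\log n$ gives a density‑$1$ set for which \emph{almost every} $n$ has almost all of its factorizations failing; so one cannot win by counting factorizations, one must \emph{produce} a good one for almost every $n$.

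The real argument therefore uses the anatomy of integers. Fix $\eps>0$; it suffices to show $\#(\CC\cap[1,x])\le\eps x$ for all large $x$. One first discards a negligible family: by the Hardy--Ramanujan/Landau estimates for $\#\{n\le x:\,\omega(n)\le k\}$, for a suitable $k=k(\eps)$ all but $\le\tfrac{\eps}{2}x$ of the $n\le x$ have at least $k$ distinct prime factors lying in a fixed window $(z,Z]$ with $z=z(\eps)$, $Z=Z(\eps)$ chosen so that $\prod_{z<p\le Z}(1-1/p)$ is small. For such $n$ we build a good factorization by peeling primes. Choose a prime $p\,\|\,n$ with $p\in(z,Z]$ and write $n=pm$. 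The sets $\CA_p:=\{t:\,pt\in\CA\}$ and $\CB_p:=\{t:\,pt\in\CB\}$ again have density $1$, and \emph{uniformly} so for $p$ in the bounded window (since $\#(\CA^c\cap[1,Zt])=o(t)$); moreover a factorization $n=ab$ with $p\mid a$ (resp. $p\mid b$) is good precisely when the induced factorization of $m$ certifies $m\in\CA_p\ast\CB$ (resp. $m\in\CA\ast\CB_p$), where for sets $X,Y$ we write $X\ast Y$ for the set of $m$ admitting a factorization $m=de$ with $d\in X$, $e\in Y$. Hence $n=pm\in\CC$ forces $m$ to avoid \emph{both} $\CA_p\ast\CB$ and $\CA\ast\CB_p$, and one recurses on $m$ with the new pair(s) of density‑$1$ sets. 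Iterating $O_\eps(1)$ times, at each stage either a good factorization appears, or $n$ is forced into an explicit set of the shape ``a $z$‑smooth number times a product of $O_\eps(1)$ primes'', which has counting function $O_\eps(x/\log x)=o(x)$; arranging the parameters so that all the exceptional sets accrued along the way together contribute $\le\tfrac{\eps}{2}x$ then gives $\#(\CC\cap[1,x])\le\eps x$, as required.

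The principal obstacle is the bookkeeping in this recursion. The dichotomy ``$m\in\CA_p\ast\CB$ or $m\in\CA\ast\CB_p$'' replaces one problem by two, so a literal iteration would track exponentially many pairs of sets; one must reorganize it—e.g. carrying along a single pair together with a growing \emph{finite} list of forbidden bounded factors, or arguing that a single branch always suffices—so that only boundedly many objects appear. One must also ensure that the densities of all the modified sets stay within $o(1)$ of $1$ \emph{uniformly} down the recursion, that the prime windows at successive levels can be chosen compatibly, and that the union of all the density‑$0$ exceptional families (smooth numbers, products of few primes, and the like) generated at every level still has counting function $o(x)$, with every implied constant depending only on $\eps$ and not on the depth. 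Dealing with prime powers $p^a\,\|\,n$ with $a\ge2$ (which one wants to move as a block) and with the edge effects once $m$ has dropped below a fixed bound introduce further routine but unavoidable complications; I expect that reconciling the ``uniform density $1$'' bookkeeping with the combinatorics of the peeling is where the technical weight of the proof sits.
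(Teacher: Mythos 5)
Your opening diagnosis is correct and well taken: the naive divisor count only gives $\sum_{n\le x,\,n\in\CC}\tau(n)=o(x\log x)$, which is far from implying $\#(\CC\cap[1,x])=o(x)$, so one really must \emph{exhibit} a good factorization of almost every $n$ rather than count bad ones. Where the proposal falls short is exactly where you flag it yourself: the peeling recursion has an unresolved exponential branching. When you remove a prime $p\,\|\,n$ and pass to $m=n/p$, the condition $n\notin\CA\cdot\CB$ forces $m\notin\CA_p\cdot\CB$ \emph{and} $m\notin\CA\cdot\CB_p$; after peeling $k$ primes $p_1,\dots,p_k$ you are tracking a conjunction over all $2^k$ ways of distributing them between the two factors, i.e.\ $m\notin\CA_{d_S}\cdot\CB_{d_{S^c}}$ for every $S\subseteq\{1,\dots,k\}$. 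None of the suggested reorganizations is actually carried out, and the natural ``pick one branch'' attempt (say, send every peeled prime to the $\CA$ side) just replaces the pair $(\CA,\CB)$ by another density-one pair $(\CA_{p_1\cdots p_k},\CB)$ and reproduces the original problem for $m$, with no visible termination mechanism. So as written this is a genuine gap, not a routine bookkeeping issue.

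The paper sidesteps the branching entirely by choosing one \emph{canonical} factorization for almost every $n$, namely the smooth--rough split at a threshold $y^{1/u}$: write $n=n_{\mathrm{smooth}}\cdot n_{\mathrm{rough}}$ and restrict to the density-one set $\CN=\{n\le x:n_{\mathrm{smooth}}\le y\}$ (the exceptional set being controlled by a Rankin-type bound on integers with a large $y^{1/u}$-smooth divisor). Then $n\notin\CA\cdot\CB$ forces either $n_{\mathrm{smooth}}\notin\CA$ or $n_{\mathrm{rough}}\notin\CB$, and each of these two events is shown to be rare for a different reason: the smooth part is small ($\le y$), so $\dd(\CA)=1$ enters via $\frac{1}{\log y}\sum_{m\le y,\,m\notin\CA}1/m\to0$ after summing the rough-count $\Phi(x/m,y^{1/u})\ll ux/(m\log y)$ over $m$; the rough part is large ($\ge x/y$), so $\dd(\CB)=1$ enters directly via the tail density $\beta(t)=\sup_{s\ge t}\#((\N\setminus\CB)\cap[1,s])/s\to0$. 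A final choice of $u$ and $y$ tending slowly to infinity makes both contributions $o(x)$. This is precisely the ``very unbalanced product'' idea you gesture at near the end (small smooth part for $\CA$, large rough part for $\CB$), but implemented in one shot with no recursion; to repair your argument you would essentially want to replace the prime-by-prime peeling with this single dichotomy, which is what makes the bookkeeping disappear.
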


\begin{cor*}
If $\CA\subset\N$ is such that $\dd(\CA)=1$, then $\dd(\CA^k)=1$ for each $k=2,3,\dots$
\end{cor*}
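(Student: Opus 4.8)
The plan is to deduce the corollary from Theorem~\ref{thm:main} by a straightforward induction on $k$, with no additional input needed. For the base case $k=2$, one simply applies Theorem~\ref{thm:main} with $\mathcal{B}=\mathcal{A}$: since $\dd(\mathcal{A})=\dd(\mathcal{A})=1$, the theorem gives $\dd(\mathcal{A}^2)=\dd(\mathcal{A}\cdot\mathcal{A})=1$.

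For the inductive step, I would assume $\dd(\mathcal{A}^k)=1$ for some $k\geq 2$ and observe the set identity $\mathcal{A}^{k+1}=\mathcal{A}^k\cdot\mathcal{A}$, which is immediate from the definition of the $k$-fold product together with the associativity and commutativity of multiplication in $\mathbb{N}$ (any product $a_1\cdots a_{k+1}$ with all $a_i\in\mathcal{A}$ can be written as $(a_1\cdots a_k)\cdot a_{k+1}$ with $a_1\cdots a_k\in\mathcal{A}^k$, and conversely). Applying Theorem~\ref{thm:main} once more, now to the pair $(\mathcal{A}^k,\mathcal{A})$, both of which have density $1$ by the inductive hypothesis and the hypothesis on $\mathcal{A}$, yields $\dd(\mathcal{A}^{k+1})=1$. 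This closes the induction and proves the corollary for all $k=2,3,\dots$

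I do not expect any genuine obstacle: all the substantive work is already contained in Theorem~\ref{thm:main}, and the only minor point to record carefully is the reduction $\mathcal{A}^{k+1}=\mathcal{A}^k\cdot\mathcal{A}$ that lets the two-set theorem be iterated.
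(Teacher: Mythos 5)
Your induction is exactly the intended argument: the paper states the corollary without proof precisely because iterating Theorem~\ref{thm:main} via $\CA^{k+1}=\CA^k\cdot\CA$ is immediate. The proposal is correct and matches the paper's (implicit) approach.
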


\begin{rmk}
In fact, the case $\CA=\CB$ of Theorem \ref{thm:main} implies easily the general case. Indeed, if $\dd(\CA)=\dd(\CB)=1$, then $\dd(\CA\cap \CB)=1$. In addition, if $(\CA\cap \CB)^2$ has density $1$, then so does $\CA\cdot \CB$.
\end{rmk} 

As it will be clear from the proof, the difference in the density of $\dd(\CA^2)$ with respect to Erd\H os's multiplication table problem lies in the fact that many elements of $\CA^2$ come from very ``unbalanced'' products, meaning products $ab$ such that the sizes of $a$ and $b$ are completely different.

\medskip

Let us now turn to Question \ref{q2}. We will answer it in a strong form that shows, among other things, that the condition that $\dd(\CA)=1$ in Theorem~\ref{thm:main} cannot be relaxed.

\begin{thm}\label{thm:main2}
For $\alpha\in[0,1]$, we have 
\begin{equation*}
\inf_{\mathcal{A} \,\subseteq\, \mathbb{N} \colon \mathbf{d}(\mathcal{A}) \,=\, \alpha} \mathbf{d}(\mathcal{A}^2)=\begin{cases}
0 & \text{if $ \alpha<1$,}\\
1 & \text{if $ \alpha=1$.}
\end{cases}
\end{equation*}
\end{thm}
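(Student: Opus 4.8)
The plan is to handle the two cases separately. When $\alpha=1$, the value of the infimum is immediate from Theorem~\ref{thm:main}: any $\CA$ with $\dd(\CA)=1$ has $\dd(\CA^2)=1$, so the infimum equals $1$. All the work is in the case $\alpha<1$, where I must exhibit, for each $\alpha\in[0,1)$ and each $\eps>0$, a set $\CA\subseteq\N$ with $\dd(\CA)=\alpha$ (or density exactly $\alpha$ — one can first build a set of density $>1-\eps$ with product set of density $<\eps$, as the abstract promises, and then thin it down to reach any prescribed density $\alpha<1$ by removing a set of the appropriate density disjoint in a multiplicatively harmless way, e.g.\ an interval of integers far out) such that $\dd(\CA^2)<\eps$. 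For the heart of the matter it suffices, by the remark after Theorem~\ref{thm:main} run in reverse spirit, to produce for every $\eps>0$ a set of density $>1-\eps$ whose square has density $<\eps$.

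The key construction I would use is a multiplicative sieve: fix a large parameter $y$ and a set of primes $\CP$ in a dyadic-type range, and let $\CA$ consist of those $n$ that are \emph{not} divisible by exactly one prime from $\CP$ (or more flexibly, those $n$ whose number of prime factors from $\CP$ avoids a prescribed residue or range). The point is to choose $\CA$ so that (i) by Mertens-type estimates the density $\dd(\CA)$ can be made as close to $1$ as desired, while (ii) for $a,b\in\CA$ the product $ab$ is forced to have a \emph{constrained} number of prime factors from $\CP$, and one arranges that the set of such constrained integers has small density. A clean way to force (ii): take $\CA=\{n: \Omega_{\CP}(n)\equiv 0 \pmod 2\ \text{or}\ \Omega_{\CP}(n) > K\}$ for suitable $K$, so that $\CA^2$ omits all integers with an odd number of $\CP$-prime factors below the threshold; choosing the primes of $\CP$ to be reasonably dense (so that a positive proportion of integers have, say, exactly one prime factor from $\CP$) gives $\dd(\CA^2)$ bounded away from $1$, and iterating the construction over many disjoint blocks of primes $\CP_1,\CP_2,\dots$ drives $\dd(\CA^2)$ below $\eps$ while keeping $\dd(\CA)$ above $1-\eps$. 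One must check that the relevant densities exist (the sets involved are defined by finitely many or suitably convergent congruence/divisibility conditions on finitely many primes per block, so this follows from the Chinese Remainder Theorem and a standard truncation/tail argument) and that the product-set density is genuinely small (bounding $\#(\CA^2\cap[1,x])$ by summing over the possible $\CP$-signatures of elements of $\CA^2$ and invoking Mertens / Hardy–Ramanujan-type counts).

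I would carry the steps out in this order: first, reduce to constructing, for each $\eps>0$, a set of density $>1-\eps$ with square of density $<\eps$; second, set up the single-block sieve with prime set $\CP$ and compute $\dd(\CA)$ and an upper bound for $\dd(\CA^2)$ in terms of $\sum_{p\in\CP}1/p$; third, iterate over disjoint blocks $\CP_1,\dots,\CP_r$ to amplify, verifying that the densities of the intersection and of its square behave as expected (the independence across blocks coming from coprimality of the blocks); fourth, adjust the density from "$>1-\eps$" down to an arbitrary prescribed $\alpha\in[0,1)$, for instance by intersecting with a set of density $\alpha/(1-\eps)$ supported on integers coprime to all the $\CP_i$ and far enough out that it does not help the product set; and finally assemble the pieces to conclude the infimum is $0$. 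The main obstacle I anticipate is step three: controlling the density of $\CA^2$ after iteration, because $\CA^2$ is a \emph{union} over all pairs $(a,b)$ and a priori products can pick up $\CP$-primes from several blocks at once, so I must argue carefully that having "bad signature" in even one block already excludes $ab$ from a large-density set — i.e.\ that the exclusions across blocks combine multiplicatively rather than being diluted — and simultaneously keep $\dd(\CA)$ from collapsing, which is the usual tension in any sieve-type lower-bound construction.
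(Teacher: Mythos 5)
Your treatment of $\alpha=1$ via Theorem~\ref{thm:main}, and the broad strategy of controlling divisibility by a fixed set of primes, are on the right track, but the specific construction you propose fails for a structural reason. You take $\CA=\{n: \Omega_{\CP}(n)\equiv 0 \pmod 2\ \text{or}\ \Omega_{\CP}(n) > K\}$. Since $\Omega_{\CP}(1)=0$ is even, we have $1\in\CA$, so $\CA = 1\cdot\CA \subseteq \CA^2$. Worse, $\CA$ is closed under multiplication (even$+$even is even; if either factor has $\Omega_{\CP}>K$ then so does the product; odd$+$odd is even, and an odd count must exceed $K$ by definition of $\CA$), hence $\CA^2\subseteq\CA$ as well, and so $\CA^2=\CA$ exactly. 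The density of the product set therefore does not drop at all, no matter how many blocks $\CP_1,\ldots,\CP_r$ you iterate over. The parity condition is symmetric in a way that cannot create the needed asymmetry between $\dd(\CA)$ close to $1$ and $\dd(\CA^2)$ close to $0$; any construction must in particular exclude $1$ (and more generally must not contain all integers coprime to the chosen primes).

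The paper's construction replaces the parity condition with a one-sided threshold: $\CB_{y,k,\CP}=\{n:\Omega_y(n)\ge k,\ (n,p)=1\ \forall p\in\CP\}$, where $\Omega_y(n)$ counts prime factors $\le y$ with multiplicity. Since $\Omega_y$ is fully additive, $\CB_{y,k,\CP}^2=\CB_{y,2k,\CP}$: the threshold doubles. Taking $k\approx \tfrac34\log\log y$ then exploits the concentration of $\Omega_y(n)$ around $\log\log y$ (Lemma~\ref{thm:Turan}, a Tur\'an/large-deviation estimate): having $\ge \tfrac34\log\log y$ small prime factors is typical, so $\dd(\CB_{y,k,\emptyset})$ can be made $>\alpha$, while having $\ge \tfrac32\log\log y$ is atypical, so $\dd(\CB_{y,2k,\emptyset})<\eps$. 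The auxiliary coprimality set $\CP\subset(y,\infty)$ is then chosen greedily to thin the density down to exactly $\alpha$ in a way that preserves the identity $\CB^2=\CB_{y,2k,\CP}$ and keeps the density of the square computable; this is a cleaner and provably correct version of your ``intersect with a set far out'' step, which as written does not guarantee that $\dd(\CA^2)$ exists or remains small. To repair your argument you would essentially need to abandon the parity/mod-$2$ condition in favor of this threshold condition.
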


\subsection*{Acknowledgements} The third author wishes to thank Paolo Leonetti for bringing the article of Hegyv\'{a}ri-Hennecart-Pach~\cite{MR3939472} to his attention. In addition, the first two authors of the paper would like to thank 
the Mathematisches Forschungsinstitut Oberwolfach for the hospitality; the paper was partially written there while attending a workshop in November 2019.

S.B. is member of the INdAM group GNAMPA and his work is partially supported by PRIN 2017 ``Geometric, algebraic and analytic methods in arithmetic'' and by INdAM.

D.K. is partially supported by the Natural Sciences and Engineering Research Council of Canada (Discovery Grant  2018-05699) and by the Fonds de recherche du Qu\'ebec - Nature et technologies (projet de recherche en \'equipe - 256442).

C.S. was supported by an INdAM postdoctoral fellowship and is a member of the INdAM group GNSAGA, and of CrypTO, the Cryptography and Number Theory group of Politecnico di Torino.

\section{Preliminaries}

\subsection*{Notation}
Given an integer $n$, we write $P^-(n)$ and $P^+(n)$ for its smallest and largest prime factors, respectively, with the convention that $P^-(1)=\infty$ and $P^+(1)=1$. If $P^+(n)\le y$, we say that $n$ is {\it $y$-smooth}, and if $P^-(n)>y$, we say that it is {\it $y$-rough}.
As usual, we let $\Phi(x, y)$ denote the number of $y$-rough numbers in $[1,x]$.
Given any integer $n$, we may write it uniquely as $n=ab$ with $P^+(a)\le y<P^-(b)$. We then call $a$ and $b$ the  {\it $y$-smooth and $y$-rough part} of $n$, respectively. Finally, we let $\Omega(n)$ denote the number of prime factors of $n$ counted with multiplicity.

\medskip

We need some standard lemmas. We give their proofs for the sake of completeness.

\begin{lem}\label{lem:rough}
	For $x \geq y > 1$, we have $\Phi(x, y) \ll x/\log y$.
\end{lem}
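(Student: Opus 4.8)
The plan is to prove the standard upper bound $\Phi(x,y) \ll x/\log y$ for the count of $y$-rough integers up to $x$. The cleanest elementary route is a sieve-type / Buchstab-style argument, or even more simply a direct Rankin-type or combinatorial estimate. Let me sketch the Rankin-free combinatorial approach first, since it avoids invoking Mertens-type machinery beyond what is truly elementary.

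First I would dispose of the trivial range: if $y \ge \sqrt{x}$, then every $y$-rough number $n \le x$ is either $1$ or a prime in $(y,x]$, so $\Phi(x,y) \le 1 + \pi(x) \ll x/\log x \ll x/\log y$ by Chebyshev's bound. So I may assume $y < \sqrt{x}$, equivalently $\log y \le \tfrac12\log x$. In this main range, the idea is to observe that any $y$-rough integer $n \in (1,x]$ has all prime factors exceeding $y$, hence $\Omega(n) \le \log x/\log y =: k$. Write such $n$ according to its smallest prime factor $p > y$: then $n = pm$ with $m \le x/p$ and $m$ itself $y$-rough (or $m=1$). This gives the Buchstab identity $\Phi(x,y) = 1 + \sum_{y < p \le x} \Phi(x/p, y)$. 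Using the trivial bound $\Phi(t,y) \le t/y + 1 \ll t/y$ for $t \ge y$ (since a $y$-rough number in $(1,t]$ lies in one of the $\lceil t/y\rceil$ blocks of length... actually more carefully: distinct $y$-rough numbers differ, and among any $y$ consecutive integers at most one... no). Let me instead use the genuinely simple bound: the number of $y$-rough integers in any interval of length $y$ is at most... this isn't quite an interval statement.

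The robust approach I would actually carry out is the following. For the count $\Phi(x,y)$, use the elementary sieve of Eratosthenes–Legendre together with the simple observation that it suffices to sieve out primes up to $y$: writing $z = y$ and $P(z) = \prod_{p \le z} p$,
\[
\Phi(x,y) = \sum_{\substack{n \le x}} \sum_{\substack{d \mid \gcd(n, P(z))}} \mu(d) = \sum_{d \mid P(z)} \mu(d) \fl{x/d} = x \prod_{p \le y}\Bigl(1 - \tfrac1p\Bigr) + O\bigl(2^{\pi(y)}\bigr).
\]
The main term is $\ll x/\log y$ by Mertens' third theorem, but the error term $2^{\pi(y)}$ is only acceptable when $y$ is quite small (roughly $y \ll \log x$). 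To cover all $y < \sqrt x$ one truncates the sieve: sieve only by primes $p \le z$ for a well-chosen $z \le y$ (say $z = \min\{y, \log x\}$ or a small power of $x$), obtaining $\Phi(x,y) \le \Phi(x, z) \ll x/\log z + 2^{\pi(z)}$; but this only yields $x/\log\log x$, not $x/\log y$. So the clean finish really does need Buchstab: combine the Buchstab identity $\Phi(x,y) = 1 + \sum_{y<p\le x}\Phi(x/p,y)$ with the bound just proved in the small-$z$ regime as a base case, and induct on the number of prime factors, i.e. iterate Buchstab $k = \lfloor \log x/\log y\rfloor$ times. Each iteration pulls out one prime factor $> y$; after all prime factors are removed one is left with the integer $1$. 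Summing, one gets
\[
\Phi(x,y) \ll \sum_{j \ge 0} \frac{1}{j!}\Bigl(\sum_{y < p \le x}\frac1p\Bigr)^{j} \cdot \frac{x}{\log y},
\]
where the sum over primes is $\ll \log(\log x/\log y)$ by Mertens, and the exponential of that, divided appropriately, stays bounded. Actually the slickest packaging: by Mertens, $\sum_{y<p\le x} 1/p = \log\log x - \log\log y + O(1)$, and carrying one factor of $1/\log y$ out front while the geometric-type series in the remaining primes telescopes, one lands exactly on $\Phi(x,y) \ll x/\log y$.

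The main obstacle is organising the induction so that the constants do not blow up: naively iterating Buchstab $k$ times produces a factor like $(\sum 1/p)^k/k!$ which one must show sums to $O(1/\log y \cdot \text{const})$ rather than growing. I expect to handle this by stopping the recursion early — once $x/p_1\cdots p_j$ drops below $y^2$, the remaining $\Phi$ is $\le 1$ plus a Chebyshev term — and then the finitely many terms that survive are controlled by $\sum_{p_1\cdots p_j \le x/y,\ p_i > y} 1/(p_1\cdots p_j) \ll 1$, which follows from Mertens since $\prod_{y<p\le x}(1-1/p)^{-1} \asymp \log x/\log y$ and we are summing over squarefree-ish products with a $1/(p_1\cdots p_j)$ weight, the whole sum being $\le \prod_{y<p\le x}(1 + 1/p + 1/p^2 + \cdots)$ which is $\asymp \log x/\log y$ — giving $\Phi(x,y) \ll (x/y)\cdot(\log x/\log y) + \ldots$, not good enough, so the correct bookkeeping keeps the $x$ (not $x/y$) attached to the \emph{last} extracted prime and uses $\sum_{p\le x}1/p \cdot (\text{something} \le 1)$. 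In the write-up I would simply cite that this is the classical bound (it is, e.g., de Bruijn's theorem on $\Phi(x,y)$), but since the paper wants a self-contained proof, I would present the two-step version: (i) Mertens via the Eratosthenes sieve for $y \le \sqrt{\log x}$, then (ii) a single Buchstab step reducing general $y$ to $y' = \sqrt{y}$ or similar, iterated $O(\log\log\log x)$ times, each step costing only a bounded multiplicative constant, which after $O(\log\log y)$ steps is still $O(1)$ — wait, that is not bounded either. The honest statement: the result is standard, the cleanest complete proof is via Buchstab's function and one should reference Tenenbaum's book; but for a short self-contained argument one can afford the weaker-looking but sufficient chain and then optimise. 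That calibration of the induction is precisely where the real work lies.
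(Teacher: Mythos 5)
Your proposal is not a proof: it is a catalogue of partial sketches, each of which you yourself flag as incomplete. You correctly observe that the Eratosthenes--Legendre sieve with full truncation at $y$ produces an error term $2^{\pi(y)}$ that is only acceptable for tiny $y$, and that truncating at $z=\log x$ yields only $\Phi(x,y)\ll x/\log\log x$; you then say ``the clean finish really does need Buchstab'' and finally admit ``that calibration of the induction is precisely where the real work lies,'' without carrying it out. As written there is therefore a genuine gap covering the whole intermediate range (roughly $\log x\ll y\le\sqrt{x}$), which is precisely the nontrivial regime.

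For comparison, the paper does not attempt a self-contained elementary derivation at all: its proof of this lemma is a one-line invocation of a sieve-theoretic black box (Theorem~14.2 of \cite{dk-book}, applied to $f=1_{P^->y}$), which gives $\Phi(x,y)\ll x\prod_{p\le y}(1-1/p)\ll x/\log y$ uniformly. Your closing instinct --- ``I would simply cite that this is the classical bound'' --- is in fact exactly the route the paper takes, and would have been a perfectly adequate proof here; the result is indeed standard (it is in Tenenbaum, and follows from the fundamental lemma of sieve theory, or from Brun or Selberg). Your opening reduction, handling $y\ge\sqrt{x}$ via Chebyshev since then every $y$-rough $n\le x$ is $1$ or a prime, is correct and would be a sensible first step of a self-contained argument, but on its own it does not touch the hard range. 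If you do want to complete the Buchstab approach, the bookkeeping can be closed by extracting the \emph{largest} prime factor instead of the smallest and pairing it with a crude bound on the cofactor, or by invoking the fundamental lemma of the sieve, but either way this is more machinery than the paper itself deploys; a citation is the economical choice and is what the authors chose.
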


\begin{proof} This follows for example from Theorem 14.2 in \cite{dk-book} with $f(n)=1_{P^-(n)>y}$. 
\end{proof}

\begin{lem}\label{lem:deltay}
Uniformly for $x\ge y^2\ge1$ and $u\ge1$, we have
\[
\#\{n\le x: \exists d|n\ \text{such that}\ P^+(d)\le y^{1/u}\ \text{and}\ d>y\} \ll x \cdot (e^{-u} +y^{-1/3}) .
\]	
\end{lem}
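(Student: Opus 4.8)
I would set $z:=y^{1/u}$ and begin by noting that the counted set is contained in $\mathcal N:=\{n\le x:\ s(n)>y\}$, where $s(n)$ denotes the largest $z$-smooth divisor of $n$ (its ``$z$-smooth part''): indeed, if $d\mid n$ with $P^+(d)\le z$ and $d>y$, then $d\mid s(n)$, so $s(n)\ge d>y$. We may assume $u\ge C$ for a suitable fixed constant $C$ (otherwise the right-hand side is $\gg x$ and the bound is trivial, since $\#\mathcal N\le x$) and $z\ge 2$ (otherwise $\mathcal N=\varnothing$). Writing $n=sm$ with $P^+(s)\le z<P^-(m)$, so that $s=s(n)$, I would split according to whether $m=1$. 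If $m>1$, then $m\ge P^-(m)>z$, so $s=n/m<x/z$, and since then $x/s>z$, Lemma~\ref{lem:rough} bounds the contribution of these $n$ by
\[
\sum_{\substack{y<s<x/z\\ P^+(s)\le z}}\Phi(x/s,z)\ \ll\ \frac{x}{\log z}\sum_{\substack{s>y\\P^+(s)\le z}}\frac1s .
\]
Rankin's inequality with exponent $\sigma:=1-1/\log z$ gives $\sum_{s>y,\,P^+(s)\le z}s^{-1}\le y^{\sigma-1}\prod_{p\le z}(1-p^{-\sigma})^{-1}=e^{-u}\prod_{p\le z}(1-p^{-\sigma})^{-1}$, and a routine estimate (comparing with $\prod_{p\le z}(1-1/p)^{-1}\ll\log z$ and checking the ratio is $O(1)$) shows $\prod_{p\le z}(1-p^{-\sigma})^{-1}\ll\log z$. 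Hence the terms with $m>1$ contribute $\ll xe^{-u}$.

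The case $m=1$ reduces to estimating $\#\{y<n\le x:\ P^+(n)\le z\}\le\Psi(x,z)$, where $\Psi(x,z)$ is the number of $z$-smooth integers in $[1,x]$. Here the hypothesis $x\ge y^2$ gives $\log x/\log z\ge 2u$ and $y\le\sqrt x$, so $y^{-1/3}\ge x^{-1/6}$; I would then bound $\Psi(x,z)$ by a case analysis in $z$. If $z\le\sqrt{\log x}$, then every $z$-smooth $n\le x$ is determined by exponents at most $\log_2 x$, so $\Psi(x,z)\le(1+\log_2 x)^{\pi(z)}=x^{o(1)}=o(x^{5/6})=o(x\,y^{-1/3})$. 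If $\sqrt{\log x}<z\le\exp((\log x)^{2/5})$, Rankin's method (again with $\sigma=1-1/\log z$) gives $\Psi(x,z)\ll x(\log z)e^{-\log x/\log z}$; since $\log z$ is here at most a small power of $\log x$ while $\log x/\log z\ge 2u$, this is $\ll x(e^{-u}+y^{-1/3})$ (comparing with $e^{-u}$ when $u$ is large relative to $\log\log x$, and with $y^{-1/3}$ otherwise). Finally, if $z>\exp((\log x)^{2/5})$, then $z$ lies in a range where $\Psi(x,z)\ll x\rho(\log x/\log z)$ with $\rho$ the Dickman function (by a de Bruijn/Hildebrand–Tenenbaum estimate for smooth numbers), and since $\log x/\log z\ge 2u$ and $u\ge C$ this is $\ll x\rho(2u)\ll xe^{-u}$. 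Combining the three ranges yields $\Psi(x,z)\ll x(e^{-u}+y^{-1/3})$, which together with the first paragraph proves the lemma.

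The main obstacle is the case $m=1$: one needs the inequality $\Psi(x,z)\ll x(e^{-u}+y^{-1/3})$ to hold uniformly over the whole range $2\le z\le\sqrt x$, which forces one to patch together elementary counting (small $z$), Rankin's method (intermediate $z$), and a genuine smooth-number asymptotic (large $z$), and to keep the thresholds mutually compatible — this bookkeeping is the delicate part. By contrast, the case $m>1$ is essentially automatic once one has Lemma~\ref{lem:rough} together with Rankin's inequality; this is also the step that uses the ``rough cofactor'' saving $1/\log z$, which is exactly what is lost in the multiplication-table problem and which makes Theorem~\ref{thm:main} possible.
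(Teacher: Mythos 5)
Your decomposition $n = sm$ with $s$ the full $y^{1/u}$-smooth part is the natural one to try, but it differs from the paper's in a way that matters. The paper does not factor out the entire smooth part: instead, for each $n$ in the bad set, it looks at the smallest initial product $p_1\cdots p_j$ of the small primes of $n$ that exceeds $y$, and sets $a=p_1\cdots p_{j-2}$, $p=p_{j-1}$, $b=n/(ap)$. This guarantees $\sqrt{y}<a\le y/p$, $ap\le y$, $P^+(a)\le p$, and crucially that $b$ is $p$-rough --- so Lemma~\ref{lem:rough} always supplies a $1/\log p$ saving, \emph{even when $n$ is entirely $y^{1/u}$-smooth}. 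Your decomposition loses that saving precisely when $m=1$, and that is why you are forced into a separate estimate of $\Psi(x,z)$, which is genuinely harder machinery (a three-regime patchwork of elementary counting, Rankin, and a smooth-number asymptotic). The paper avoids smooth-number counts entirely; your route needs them. The paper also gets the two terms $e^{-u}$ and $y^{-1/3}$ from a single Rankin step with the capped exponent $\eps_p=\min\{2/3,2/\log p\}$: the $2/\log p$ branch yields $e^{-u}$ and the $2/3$ cap yields $y^{-1/3}$, whereas in your write-up the $y^{-1/3}$ term only surfaces ad hoc in the $m=1$ discussion.

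There is also a concrete gap in your $m>1$ step: taking $\sigma=1-1/\log z$ fails when $z<e$ (then $\sigma\le0$ and the Euler product is meaningless), and more generally the bound $\prod_{p\le z}(1-p^{-\sigma})^{-1}\ll\log z$ is false for $z$ near $e$ (the $p=2$ factor blows up as $\sigma\to0^+$). You would need to cap the Rankin exponent as the paper does, or treat bounded $z$ by a direct argument (for $z=O(1)$ the set is covered by $n$ divisible by some prime power $p^a>y^{1/\pi(z)}$ with $p\le z$, and one can then show the count is $\ll x\,y^{-1/3}$; this is in fact exactly what the $y^{-1/3}$ term is there for). Finally, the $m=1$ analysis as stated is only a sketch: in the range $\sqrt{\log x}<z\le\exp((\log x)^{2/5})$ the inequality $x(\log z)e^{-\log x/\log z}\ll x(e^{-u}+y^{-1/3})$ requires a genuine argument (you need to compare $\log z=\log y/u$ against $e^u$ and use that $\log x/\log z\ge 2u$; the parenthetical remark about ``$u$ large relative to $\log\log x$'' does not resolve the case $u$ small and $\log y$ large without invoking the stronger superexponential decay of $\Psi$). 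So the route can be made to work, but as written it has gaps in exactly the places your own commentary flags as delicate, and the paper's reparametrisation is cleaner because it never produces a term without a rough cofactor.
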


\begin{proof} Without loss of generality, $u\ge4$. Let $\CB$ denote the set of $n\in\Z\cap[1,x]$ that have a $y^{1/u}$-smooth divisor $d>y$. Given $n\in\CB$, let $p_1\le p_2\le\cdots \le p_k$ be the sequence of prime factors of $n$ of size $\le y^{1/u}$ listed in increasing order and according to their multiplicity. By our assumption on $n$, we must have $p_1\cdots p_k>y$. Let $j$ be the smallest integer such that $p_1\cdots p_j>y$. We must have $j\ge5$ because all factors $p_i$ are $\le y^{1/u}\le y^{1/4}$. We then set $a=p_1\cdots p_{j-2}$, $p=p_{j-1}$, and $b=n/(ap)$, so that $a>y/(p_{j-1}p_j)\ge \sqrt{y}$, $ap\le y$, and $P^+(a)\le p\le P^-(b)$. Consequently,
\begin{equation}\label{shiu1}
\#\CB\le \sum_{p\le y^{1/u}} \sum_{\substack{P^+(a) \le  p \\ \sqrt{y}<a\le y/p}} \sum_{\substack{b\le x/(ap) \\ P^-(b)\ge p}} 1 
	\ll \sum_{p\le y^{1/u}} \sum_{\substack{P^+(a) \le  p \\ a>\sqrt{y}}}   \frac{x}{ap\log p} 
\end{equation}
by Lemma \ref{lem:rough}. If we let $\eps_p=\min\{2/3,2/\log p\}$, then Rankin's trick implies that
\[
\frac{\#\CB}{x}  \ll \sum_{p\le y^{1/u}} \sum_{\substack{P^+(a) \le  p \\ a>\sqrt{y}}}   \frac{(a/\sqrt{y})^{\eps_p}}{ap\log p}  
		= \sum_{p\le y^{1/u}} \frac{y^{-\eps_p/2}}{p\log p} \sum_{P^+(a) \le  p} \frac{1}{a^{1-\eps_p}}.
\]
The sum over $a$ equals $\prod_{q\le p}(1-q^{-1+\eps_p})^{-1}$ with $q$ denoting a prime number. Since $q^{\eps_p}=1+O(\log q/\log p)$ for $q\le p$, Mertens' estimates \cite[Theorem 3.4]{dk-book} imply that the sum over $a$ is $\ll\log p$. We conclude that
\als{
\frac{\#\CB}{x} \ll y^{-1/3}+\sum_{100<p\le y^{1/u}} \frac{e^{-\log y/\log p} }{p} 
	&\le y^{-1/3}+\sum_{j\ge1} \sum_{y^{1/(u(j+1))}<p\le y^{1/(uj)}} \frac{e^{-ju}}{p}  \\
	&\ll y^{-1/3}+ \sum_{j\ge1} e^{-ju} \ll y^{-1/3}+e^{-u} 
}
using Mertens' estimates once again. This completes the proof.
\end{proof}

\begin{lem}\label{thm:Turan} Let $y\ge2$ and $\lambda\in[0,1.99]$, and set $Q(\lambda)=\lambda\log\lambda-\lambda+1$ for $\lambda > 0$ and $Q(0) = 0$. 
If $0\le \lambda\le1$, then 
\[
\prod_{p\le y}\bigg(1-\frac{1}{p}\bigg) \sum_{\substack{P^+(m)\le y \\ \Omega(m)\le \lambda\log\log y}} \frac{1}{m} \ll (\log y)^{-Q(\lambda)} ,
\]
whereas if $1\le\lambda\le 1.99$, then 
\[
\prod_{p\le y}\bigg(1-\frac{1}{p}\bigg) \sum_{\substack{P^+(m)\le y \\ \Omega(m)\ge \lambda\log\log y}}\frac{1}{m} \ll (\log y)^{-Q(\lambda)} .
\]
\end{lem}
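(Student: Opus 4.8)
\textbf{Proof plan for Lemma~\ref{thm:Turan}.}
The plan is to treat both inequalities as instances of the same Rankin-type argument, since they differ only in the direction of the constraint on $\Omega(m)$. Fix $\lambda$ and introduce a parameter $z>0$, to be chosen as $z=\lambda$ at the end (this is the classical optimization that produces the rate function $Q$). In the first case $0\le\lambda\le 1$, so $z\le 1$, and the trick is to use $z^{\Omega(m)-\lambda\log\log y}\ge 1$ on the range $\Omega(m)\le\lambda\log\log y$: this gives
\[
\sum_{\substack{P^+(m)\le y \\ \Omega(m)\le \lambda\log\log y}} \frac{1}{m} \le (\log y)^{-\lambda\log z}\sum_{P^+(m)\le y} \frac{z^{\Omega(m)}}{m} .
\]
In the second case $1\le\lambda\le 1.99$, so $z\ge 1$, and one uses $z^{\Omega(m)-\lambda\log\log y}\ge 1$ on the complementary range $\Omega(m)\ge\lambda\log\log y$ to obtain the same bound. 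So in both cases everything reduces to estimating the Euler product $\sum_{P^+(m)\le y} z^{\Omega(m)}/m = \prod_{p\le y}(1-z/p)^{-1}$.

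The second step is to evaluate this Euler product. Writing $\prod_{p\le y}(1-z/p)^{-1} = \prod_{p\le y}(1-1/p)^{-z}\cdot\prod_{p\le y}\big[(1-1/p)^{z}(1-z/p)^{-1}\big]$, the first factor is $\asymp(\log y)^{z}$ by Mertens, and the second factor converges to a finite positive constant as $y\to\infty$ provided $z<2$ (the local factor is $1+O(1/p^2)$), which holds since $z=\lambda\le 1.99<2$. Hence $\prod_{p\le y}(1-z/p)^{-1}\ll_\lambda (\log y)^{z}$, and after multiplying by $\prod_{p\le y}(1-1/p)\asymp(\log y)^{-1}$ we get
\[
\prod_{p\le y}\Big(1-\frac1p\Big)\sum_{\substack{P^+(m)\le y \\ \Omega(m)\lessgtr \lambda\log\log y}}\frac{1}{m} \ll (\log y)^{z-1-\lambda\log z}.
\]
Taking $z=\lambda$ makes the exponent $\lambda-1-\lambda\log\lambda = -(\lambda\log\lambda-\lambda+1)=-Q(\lambda)$, which is exactly the claimed bound; the degenerate case $\lambda=0$ (where one should instead just drop all $m>1$, giving $1\ll 1 = (\log y)^{-Q(0)}$) is handled separately.

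The only point requiring care is the uniformity of the implied constant. Since $\lambda$ ranges over a compact set $[0,1.99]$ bounded away from $2$, the constant in the convergent Euler product $\prod_p[(1-1/p)^{\lambda}(1-\lambda/p)^{-1}]$ can be bounded uniformly in $\lambda$, so the final implied constant is absolute. I do not anticipate a genuine obstacle here: this is the standard Tur\'an/large-deviation estimate for $\Omega$, and the restriction $\lambda\le 1.99$ is present precisely so that the Euler product stays convergent. The mild subtlety is only notational—packaging the two ranges of $\lambda$ into one argument while keeping track of the sign of $\log z$—together with checking that $z^{\Omega(m)}$ is the right weight (one could equally use a truncated exponential weight, but the $z^{\Omega}$ choice makes the Euler product exact).
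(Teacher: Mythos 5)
Your proposal is correct and follows essentially the same argument as the paper: Rankin's trick with the weight $z^{\Omega(m)}$ (with $z=\lambda$), reduction to the Euler product $\prod_{p\le y}(1-\lambda/p)^{-1}$, and Mertens' estimate to compare that product with $(\log y)^{\lambda-1}\prod_{p\le y}(1-1/p)^{-1}$. The only cosmetic difference is that you introduce a free parameter $z$ and then optimize by setting $z=\lambda$, whereas the paper substitutes $\lambda$ directly; your remark on uniformity of the constants over $\lambda\in[0,1.99]$ is the right point to flag, and your treatment of $\lambda=0$ matches the paper's "trivial by Mertens" dismissal (though strictly the conclusion there is $\asymp(\log y)^{-1}\ll 1$, not $1\ll 1$).
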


\begin{proof} The result is trivial if $\lambda=0$ by Mertens' estimates \cite[Theorem 3.4]{dk-book}, so assume that $\lambda>0$. If $0<\lambda\le1$, then
\als{
\sum_{\substack{P^+(m)\le y \\ \Omega(m)\le \lambda\log\log y}} \frac{1}{m} 
	\le \sum_{P^+(m)\le y}\frac{\lambda^{\Omega(m)-\lambda\log\log y}}{m} 
	&= (\log y)^{-\lambda\log\lambda} \prod_{p\le y} \bigg(1-\frac{\lambda}{p}\bigg)^{-1}  \\
	&\asymp (\log y)^{-Q(\lambda)}\prod_{p\le y} \bigg(1-\frac{1}{p}\bigg)^{-1} 
}
where we used Mertens' estimates once again. Similarly, if $1\le\lambda\le1.99$, then
\[
\sum_{\substack{P^+(m)\le y \\ \Omega(m)\ge \lambda\log\log y}} \frac{1}{m} 
\le \sum_{P^+(m)\le y}\frac{\lambda^{\Omega(m)-\lambda\log\log y}}{m} 
\asymp (\log y)^{-Q(\lambda)}\prod_{p\le y} \bigg(1-\frac{1}{p}\bigg)^{-1} . 
\]
This completes the proof.
\end{proof}

\begin{lem}\label{lem:sieve}
Let $\CP$ be a set of primes such that $\sum_{p\in\CP}1/p<\infty$. Then
\[
\dd\big(\{n\in\N:p|n\ \Rightarrow p\notin\CP\}\big) = \prod_{p\in\CP} \bigg(1-\frac{1}{p}\bigg) .
\]
\end{lem}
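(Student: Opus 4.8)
The plan is to prove Lemma~\ref{lem:sieve} by a standard inclusion-exclusion / sieve argument, using that the "tail" of $\CP$ is negligible because $\sum_{p\in\CP}1/p$ converges. Write $\CS = \{n\in\N : p\mid n \Rightarrow p\notin\CP\}$, the set of integers all of whose prime factors avoid $\CP$. Fix a parameter $z$ and split $\CP = \CP_{\le z} \cup \CP_{>z}$, where $\CP_{\le z}$ consists of the primes of $\CP$ that are $\le z$ and $\CP_{>z}$ the rest. First I would handle the finite part: by inclusion-exclusion over the squarefree divisors $d$ composed of primes in $\CP_{\le z}$,
\[
\#\{n\le x : p\mid n,\ p\in\CP_{\le z} \Rightarrow \text{false}\}
= \sum_{\substack{d\mid \prod_{p\in\CP_{\le z}}p}} \mu(d)\fl{x/d}
= x\prod_{p\in\CP_{\le z}}\Bigl(1-\frac1p\Bigr) + O\bigl(2^{\#\CP_{\le z}}\bigr),
\]
so for each fixed $z$ the density of the set of integers avoiding the primes of $\CP_{\le z}$ exists and equals $\prod_{p\in\CP_{\le z}}(1-1/p)$.

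Next I would control the difference between $\CS$ and this larger set, namely the integers divisible by at least one prime of $\CP_{>z}$. The count of $n\le x$ divisible by some $p\in\CP_{>z}$ is at most $\sum_{p\in\CP_{>z}}\fl{x/p} \le x\sum_{p\in\CP_{>z}}1/p$. Since $\sum_{p\in\CP}1/p<\infty$, the tail $\sum_{p\in\CP_{>z}}1/p =: \eps(z)$ tends to $0$ as $z\to\infty$. Therefore, for every $x$ and $z$,
\[
x\prod_{p\in\CP_{\le z}}\Bigl(1-\frac1p\Bigr) - x\,\eps(z) + O\bigl(2^{\#\CP_{\le z}}\bigr)
\ \le\ \#(\CS\cap[1,x])\ \le\
x\prod_{p\in\CP_{\le z}}\Bigl(1-\frac1p\Bigr) + O\bigl(2^{\#\CP_{\le z}}\bigr).
\]
Dividing by $x$, letting $x\to\infty$, and noting $2^{\#\CP_{\le z}}/x\to 0$, I get
\[
\prod_{p\in\CP_{\le z}}\Bigl(1-\frac1p\Bigr) - \eps(z)
\ \le\ \liminf_{x\to\infty}\frac{\#(\CS\cap[1,x])}{x}
\ \le\ \limsup_{x\to\infty}\frac{\#(\CS\cap[1,x])}{x}
\ \le\ \prod_{p\in\CP_{\le z}}\Bigl(1-\frac1p\Bigr).
\]
Finally, letting $z\to\infty$: the convergence of $\sum_{p\in\CP}1/p$ forces $\prod_{p\in\CP_{\le z}}(1-1/p) \to \prod_{p\in\CP}(1-1/p)$ (a convergent infinite product, positive since the sum of $1/p$ converges) and $\eps(z)\to 0$, so the $\liminf$ and $\limsup$ are squeezed to the common value $\prod_{p\in\CP}(1-1/p)$, which proves that $\dd(\CS)$ exists and equals this product.

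The only mildly delicate point is the interchange of limits: one must take $x\to\infty$ first (for fixed $z$, so the error term $2^{\#\CP_{\le z}}$ is genuinely negligible) and only then $z\to\infty$; doing it in the other order would leave an unbounded error. Everything else is routine—the sieve identity is exact inclusion-exclusion, the tail bound is a one-line union bound, and the infinite-product convergence is standard. I do not expect any real obstacle here; this is a warm-up lemma whose sole purpose is to manufacture sets of prescribed density $\prod_{p\in\CP}(1-1/p)$ from a given convergent prime series, which is presumably used later to build the extremal examples in Theorem~\ref{thm:main2}.
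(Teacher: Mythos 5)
Your proof is correct and follows essentially the same sieve argument as the paper: inclusion-exclusion over a finite truncation of $\CP$ plus a union bound on the tail using $\sum_{p\in\CP}1/p<\infty$. The only cosmetic difference is that you keep the cutoff $z$ fixed and take a double limit ($x\to\infty$ then $z\to\infty$), whereas the paper lets the cutoff grow with $x$ (taking $\CP'=\CP\cap[1,\log x]$, so that $2^{\#\CP'}\le 2^{\log x}=o(x)$) and concludes in a single pass; both are standard and equivalent.
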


\begin{proof}
The number of integers $n\le x$ with a prime divisor $p>\log x$ from $\CP$ is 
\[
\le \sum_{p>\log x,\, p\in\CP} \frac{x}{p} =o(x)\qquad\text{as}\quad x\to\infty,
\]
because $\sum_{p\in\CP}1/p$ converges. Hence, if we write $\CP'=\CP\cap[1,\log x]$, then 
\[
\#\{n\le x: p|n\ \Rightarrow p\notin\CP\}=\#\{n\le x:p|n\ \Rightarrow p\notin\CP'\}+o(x) = x\prod_{p\in\CP'}\bigg(1-\frac{1}{p}\bigg) + o(x)  
\]
from the inclusion-exclusion principle that has $\le 2^{\#\CP'}\le 2^{\log x}=o(x)$ steps (e.g., see \cite[Theorem 2.1]{dk-book}). 
Since $\prod_{p\in\CP\setminus\CP'}(1-1/p)\sim1$ by our assumption that $\sum_{p\in\CP}1/p<\infty$, the proof is complete.
\end{proof}

\section{Proof of Theorem~\ref{thm:main}}

Assume $x$ is sufficiently large and let $y = y(x)$ and $u = u(x)$ to be chosen later, with $y,u \to +\infty$ slowly as $x \to +\infty$. In particular, $y\le \sqrt{x}$. In the following, for the sake of notation, we will often omit the dependence on $x,y,u$.

With a small abuse of notation, given an integer $n$, let $n_{\mathrm{smooth}}$ denote its $y^{1/u}$-smooth part and let $n_{\mathrm{rough}}$ denote its $y^{1/u}$-rough part. We then set
\[
\mathcal{N}=\{ n \leq x: n_{\mathrm{smooth}} \le  y \} .
\]
By Lemma~\ref{lem:deltay}, we have $\#\CN\sim x$ as $x\to\infty$. Therefore, in order to prove Theorem~\ref{thm:main}, it is enough to show that
\[
\#\mathcal{C} = o(x), 
\quad\text{where}\quad 
	\mathcal{C} := \CN\setminus (\mathcal{A}\cdot\mathcal{B}).
\]

Let $n\in\CC$. Since $n = n_{\mathrm{smooth}} \cdot n_{\mathrm{rough}}$, we must have that  either $n_{\mathrm{smooth}}\notin \CA$ or $n_{\mathrm{rough}}\notin \CB$. Consequently,
\[
\#\CC\le S_1+S_2
\]
with
\[
S_1 := \#\{ n  \in\CN:  n_{\mathrm{smooth}}\notin \CA\} 
	\quad\text{and}\quad 
S_2 := \#\{ n  \in\CN:  n_{\mathrm{rough}}\notin \CB\} .
\]

Let us first bound $S_1$. Letting $m=n_{\mathrm{smooth}}$, we have
\begin{equation*}
S_1 \leq \sum_{m\le y,\, m\notin \CA} \Phi(x/m,y^{1/u})  
	\ll \frac{ux}{\log y} \sum_{m\le y ,\, m\notin \CA}\frac{1}{m}
\end{equation*}
by Lemma \ref{lem:rough}. Since we have assumed that $\dd(\CA)=1$, we must have that $\dd(\N\setminus\CA)=0$ and thus
\begin{equation*}
\alpha(t) := \frac1{\log t} \sum_{m\le t,\, m\notin \CA} \frac{1}{m} \ \rightarrow\  0 \qquad\text{as}\quad t\to\infty .
\end{equation*}
Hence, setting $u=u(y) := \alpha(y)^{-1/2}$, we have $u \to +\infty$ and $S_1=o(x)$ as $x \to +\infty$.

Let us now bound $S_2$. Writing $m'=n_{\mathrm{rough}}$, we have 
\begin{equation*}
S_2 \leq \sum_{m\le y} \#\{m'\le x/m: m'\notin\CB\} .
\end{equation*}
By hypothesis, we have $\dd(\CB)=1$, so that $\dd(\N\setminus\CB)=0$. Thus 
\begin{equation*}
\beta(t) := \sup_{s \,\geq\, t} \frac{\#\big((\N\setminus \CB)\cap[1,s]\big)}{s} \ \rightarrow\  0 \qquad\text{as}\quad t\to\infty .
\end{equation*}
Hence, setting $y := \min\big( x^{1/2}, \exp\big(\beta(x^{1/2})^{-1/2}\big)\big)$, we have $y \to +\infty$ as $x \to +\infty$ and
\begin{equation*}
S_2 \leq \sum_{d \,\leq\, y} \beta(x/d) \cdot \frac{x}{d} \leq x \beta(x/y) \sum_{d \,\leq\, y} \frac1{d} \ll x\beta(x^{1/2}) \log y \leq x \beta(x^{1/2})^{1/2} = o(x) .
\end{equation*}
In conclusion, $\#\mathcal{C} = o(x)$, as desired.\qed

\begin{rmk}
The proof of Theorem~\ref{thm:main} can be made quantitative. For example, if one has  $\#\{n\le x: n\notin\CA\}, \#\{n\le x: n\notin \CB\} \ll x (\log x)^{-a}$ for some fixed $0<a<1$, then taking $y=\exp\big((\log x)^{\frac{a}{1+a}}\big)$ and $u=\log\log x$ in the above argument yields
\begin{align*}
\#\{n\le x:n\notin\CA\cdot\CB\}  \ll xe^{-u} +\frac{x u}{(\log y)^a}+\frac{x\log y}{(\log x)^a}\ll x(\log x)^{-\frac{a^2}{1+a}+o(1)} .
\end{align*}
An interesting question is to determine the optimal exponent of $\log x$ in this upper bound.
\end{rmk}

\section{Proof of Theorem~\ref{thm:main2}}
The case $\alpha=1$ follows from Theorem~\ref{thm:main}, whereas for the case $\alpha=0$ one can just observe that $\mathbf{d}(\emptyset)=\mathbf{d}(\emptyset^2)=0$. We may thus assume $\alpha\in(0,1)$. Given any $\varepsilon>0$, we need to construct a set $\mathcal A$ of density $\alpha$ such that the density of $\mathcal A^2$ exists and is smaller than $\varepsilon$.

Let $k\in\mathbb N$, $y\ge1$ and a set of primes $\CP\subset(y,+\infty)$ with $\sum_{p \in \CP} 1 / p < \infty$ to be chosen later. Using the notation $\Omega_y(n) =\sum_{p^a|n,\, p\le y} 1$, let us consider the sets
\begin{equation*}
\CB_{y,k,\CP} := \big\{n \in \N  : \Omega_y(n) \geq  k,\ (n,p)=1\ \forall p\in\CP \big\} .
\end{equation*}
The key property these sets have is that $\CB_{y,k,\CP}^2=\CB_{y,2k,\CP}$.

Now, using Lemma \ref{lem:sieve} twice (once, with $\CP_{\mathrm{Lemma}\ \ref{lem:sieve}}=\CP\cup\{p\le y\}$ and once with $\CP_{\mathrm{Lemma}\ \ref{lem:sieve}}=\{p\le y\}$), we find that
\begin{align*}
\dd(\CB_{y,k,\CP})
&=\prod_{p\in\CP}\bigg(1-\frac1p\bigg)\prod_{p\leq y}\bigg(1-\frac1p\bigg)\sum_{\substack{P^+(m)\le y  \\ \Omega(m)\geq k}} \frac1m=  \dd(\CB_{y,k,\emptyset})  \prod_{p\in\CP}\bigg(1-\frac{1}{p}\bigg).
\end{align*}
Similarly,
\begin{align*}
\mathbf{d}(\mathcal{B}_{y,k,\mathcal P}^2)=\mathbf{d}(\mathcal{B}_{y,2k,\mathcal P})&=\prod_{p\in\mathcal P}\bigg(1-\frac1p\bigg)\mathbf{d}(\mathcal{B}_{y,2k,\emptyset}).
\end{align*}

Now, take $y:=\exp(\exp(4k/3))$, so that $k=\tfrac34\log\log y$. For any fixed $\varepsilon>0$, Lemma~\ref{thm:Turan} implies that if $k$ is sufficiently large in terms of $\alpha$ and $\eps$, then $\mathbf{d}(\mathcal{B}_{y,k,\emptyset})>\alpha$ and $\dd(\CB_{y,2k,\emptyset})<\varepsilon$. Let us fix for the remainder of the proof such a choice of $k$. We then construct $\CP$ in the following way: we take $p_1>y$ to be the smallest prime such that $(1-1/p_1)\dd(\CB_{y,k,\emptyset})>\alpha$, $p_2>p_1$ the smallest prime such that $(1-1/p_1)(1-1/p_2)\dd(\CB_{y,k,\emptyset})>\alpha$ and so on. Taking $\CP:=\{p_1,p_2,\dots\}$ we clearly have $\dd(\CB_{y,k,\emptyset})\prod_{p\in\CP}(1-1/p)=\alpha$. Thus, $\dd(\CB_{y,k,\CP})=\alpha$ and $\dd(\CB_{y,k,\CP}^2)<\varepsilon$, as desired.\qed

\begin{rmk}
If $\mathbf{d}(\mathcal{A}^2)$ in Theorem~\ref{thm:main2} is replaced by the upper density  $\overline{\mathbf{d}}(\mathcal{A}^2)$, then one could just take $\mathcal A$ to be any density $\alpha$ subset of $ \big\{n \in \mathbb{N} : \Omega_y(n) \geq  \frac34\log\log y\big\} $ for $y$ large enough. However, in general there is no guarantee that $\mathcal A^2$ has asymptotic density. For this reason, in order to prove Theorem~\ref{thm:main2}, it is more convenient to construct explicit suitable sets $\mathcal A$.
\end{rmk}

\bibliographystyle{plain}

\begin{thebibliography}{99}

\bibitem{MR3640773}
J.~Cilleruelo, D.~S. Ramana, and O.~Ramar{\'e}, \textit{Quotient and product sets
  of thin subsets of the positive integers}, Proc. Steklov Inst. Math.
  \textbf{296} (2017), no.~1, 52--64.

\bibitem{MR73619}
P.~Erd\H{o}s, \textit{Some remarks on number theory}, Riveon Lematematika
  \textbf{9} (1955), 45--48.

\bibitem{MR0126424}
\bysame,\, \textit{An asymptotic inequality in the theory of numbers}, Vestnik
  Leningrad. Univ. \textbf{15} (1960), no.~13, 41--49.

\bibitem{MR204378}
\bysame,\, \emph{On some properties of prime factors of integers}, Nagoya
  Math. J. \textbf{27} (1966), 617--623.

\bibitem{MR2434882}
K.~Ford, \textit{The distribution of integers with a divisor in a given
  interval}, Ann. of Math. (2) \textbf{168} (2008), no.~2, 367--433.

\bibitem{MR3939472}
N.~Hegyv\'{a}ri, F.~Hennecart, and P.~P. Pach, \emph{On the density of sumsets
  and product sets}, Australas. J. Combin. \textbf{74} (2019), 1--16.

\bibitem{MR3187928}
D.~Koukoulopoulos, \textit{On the number of integers in a generalized
  multiplication table}, J. Reine Angew. Math. \textbf{689} (2014), 33--99.

\bibitem{dk-book}\bysame,\,
{\it The distribution of prime numbers.} 
Graduate Studies in Mathematics, 203. American Mathematical Society, Providence, RI, 2019.

\bibitem{SannaAMH}
C.~Sanna, \emph{A note on product sets of random sets}, Acta Math. Hungar., to appear.

  \bibitem{MR928635}
G.~Tenenbaum, \emph{Un probl\`eme de probabilit\'e conditionnelle en arithm\'etique},
Acta Arith. 49 (1987), no. 2, 165--187.

\end{thebibliography}

\end{document}